\numberwithin{equation}{section}
\newtheorem{theorem}{Theorem}[section]
\newtheorem{lemma}[theorem]{Lemma}
\newtheorem{corollary}[theorem]{Corollary}
\newtheorem{remark}[theorem]{Remark}
\newtheorem{problem}[theorem]{Problem}
\title[curvature flow and the $(p,q)$-Christoffel-Minkowski problems]
{An expanding curvature flow and the $(p,q)$-Christoffel-Minkowski problems}
\author{Bin Chen}
\address{Bin Chen \newline \indent School of Mathematics and Statistics, Nanjing University of Science and Technology, Nanjing, China}
\curraddr{}
\email{chenb121223@163.com}
\thanks{}
\author{Jingshi Cui}
\address{Jingshi Cui \newline \indent chool of Mathematics and Statistics, Nanjing University of Science and Technology, Nanjing, China}
\curraddr{}
\email{cuijingshi626@163.com}
\thanks{The research is supported by the National Science Foundation of China (12271254; 12141104)}
\author{Peibiao Zhao}
\address{Peibiao Zhao \newline \indent School of Mathematics and Statistics, Nanjing University of Science and Technology, Nanjing, China}
\email{pbzhao@njust.edu.cn}
\thanks{Corresponding author: Peibiao Zhao}
\subjclass[2020]{52A20, 35J25, 35K96}
\keywords{Christoffel-Minkowski problem; Hessian equation; Expanding curvature flow}
\begin{document}
\begin{abstract}
The present paper introduces a new class of geometric measures, the $k$-th $(p,q)$-mixed curvature measures, and 
  a natural correspondence-$(p,q)$-Christoffel-Minkowski problem is proposed.
 The $(p,q)$-Christoffel-Minkowski problem posed here can be regarded as a natural generalization of the $L_p$ Christoffel-Minkowski problem and $L_p$ dual Minkowski problem.

We investigate and arrive at the existence of smooth solution to the $(p,q)$-Christoffel-Minkowski problem by a type of expanding curvature flow. Furthermore, the uniqueness result of solutions to the $(p,q)$-Christoffel-Minkowski problem shall be discussed.
\end{abstract}

\maketitle

\vskip 20pt
\section{Introduction }
It is well know that the classical Brunn-Minkowski-theory of convex bodies (i.e., compact, convex sets) in $n$-dimensional Euclidean spaces $\mathbb{R}^n$ plays an important role in the study of convex geometric analysis and develops rapidly in recent years.
The {\it classical Minkowski problem}, introduced by Minkiwski \cite{M}, is one of the cornerstones of the classical Brunn-Minkowski theory
which asks if a given Borel measure $\mu$ on the unit sphere $\mathbb{S}^{n-1}$ arises as the surface area measure $S(K,\cdot)$ of a convex body $K$ in $\mathbb{R}^n$. Here the surface area measure $S(K,\cdot)$ of $K$ is defined, for
Borel $\omega\subseteq\mathbb{S}^{n-1}$, by
$$S(K,\omega)=\int_{x\in g_{K}^{-1}(\omega)}d\mathcal{H}^{n-1}(x),$$
where $g_K: \partial^\prime K\rightarrow\mathbb{S}^{n-1}$ is the Gauss map of $K$, defined on $\partial^\prime K$, the set of points of $\partial K$ that have a unique outer unit normal, and $\mathcal{H}^{n-1}$ is $(n-1)$-dimensional Hausdorff measure.  The classical Minkowski problem argues the existence, uniqueness and regularity of a convex body.

If $\partial K$ is smooth with a positive Gauss curvature, the surface area measure of $K$ is absolutely continuous with respect to Lebesgue measure, $S$, on $\mathbb{S}^{n-1}$, and the density is the reciprocal Gauss curvature, when it is viewed as a function of the outer unit normals of $\partial K$. The density has an explicit description in terms of the support function and its Hessian matrix on $\mathbb{S}^{n-1}$,
$$\frac{dS(K,\cdot)}{dS}=\det(\nabla_{ij}h_K+h_K\delta_{ij}),$$
where $h_K$ is the support function of $K$, $\nabla_{ij}h$ is the Hessian matrix of $h$ w.r.t. an orthonormal frame on $\mathbb{S}^{n-1}$, and $\delta_{ij}$ is the Kronecker symbol.
Thus, if the given Borel measure $\mu$ has a positive continuous density, the classical Minkowski problem can be seen as the problem of prescribing the Gauss curvature in differential geometry.

In modern convex geometry, the $L_p$ Minkowski problem \cite{L}, Orlicz Minkowski problem \cite{H0} and their dual Minkowski problem \cite{G1,G2,HLY,L0} generalize and dualize the classical Minkowski problem, and then studied by \cite{Bo,Cw,HL0,HZ,J,J1,LYZ0,Z,Z1,Z2,Z3,Z4} and  the references therein.

In \cite{L}, Lutwak defined the {\it $k$-th $L_p$-surface area measure} using $L_p$ variational formula of quermassintegral:
\begin{align}\label{1.0.1}
\nonumber S_{p,k}(K,\omega)&=\int_\omega h_K^{1-p}dS_{k}(K,\cdot)\\
&=\int_\omega\sigma_{n-k}(\kappa_K)dS_{p}(K,\cdot),\ \forall \ Borel \ set\ \omega\subseteq\mathbb{S}^{n-1},
\end{align}
for $K\in\mathcal{K}_o^n$ (the set of convex bodies containing the origin in its interior) and $k=1,\cdots,n-1$, where $S_k(K,\cdot)$ is the {\it $k$-th surface area measure}, $S_{p}(K,\cdot)$ is the $L_p$-surface area measure, $\sigma_{n-k}$ is the $(n-k)$-th elementary symmetric function and $\kappa_K$ are the principal curvatures of $K$.

The general problem concerns with the existence of convex bodies with prescribed $k$-th $L_p$-surface area measure is often called the {\it $L_p$-Christoffel-Minkowski problem}.
Inspired by the $L_p$ (Orlicz)-Minkowski problem, it is natural to consider the $L_p$ (Orlicz)-Christoffel-Minkowski problem (see e.g., \cite{C0,GM,GX,HMS,JL}).

Motivated by the foregoing celebrated works,  we introduce  a new class of geometric measures $\mathcal{M}_{(p,q),k}(K,\cdot)$ as follows:
\begin{align}\label{1.0.2}
\nonumber\mathcal{M}_{(p,q),k}(K,\omega)&=\int_\omega h_K^{-p}d\tilde{C}_{q,k}(K,\cdot)\\
&=\int_\omega\sigma_{n-k}^{-1}(\lambda_K)d\tilde{C}_{p,q}(K,\cdot),\ \forall \ Borel \ set\ \omega\subseteq\mathbb{S}^{n-1},
\end{align}
for $K\in\mathcal{K}_o^n$ and $k=1,\cdots,n-1$, where $\lambda_K$ are the principal radii of curvature of $K$, $\tilde{C}_{q,k}(K,\cdot)$ and $\tilde{C}_{p,q}(K,\cdot)$ are the $k$-th dual curvature measure and $L_p$ dual curvature measure introduced in \cite{LSW,L0}. We refer to the measure $\mathcal{M}_{(p,q),k}(K,\cdot)$ as the {\it $k$-th $(p,q)$-mixed curvature measure}.

Naturally, we state the Christoffel-Minkowski type problem as follows.
\begin{problem}\label{p1.1}
(The $(p,q)$-Christoffel-Minkowski problem) For fixed $p,q\in\mathbb{R}$ and $k=1,\cdots,n-1$. What are the necessary and sufficient conditions for a Borel measure $\mu$ on $\mathbb{S}^{n-1}$ to be the $k$-th $(p,q)$-mixed curvature measure $\mathcal{M}_{(p,q),k}(K,\cdot)$ of $K\in\mathcal{K}_o^n$? If such a convex body $K$ exists to what extent is $K$ unique?
\end{problem}

It is well known that $\tilde{C}_{p,q}(K,\cdot)$ in the smooth category is absolutely continuous with respect to  Lebesgue measure $S$ on $\mathbb{S}^{n-1}$, and $d\tilde{C}_{p,q}(K,\cdot)/dS=h^{1-p}\rho^{q-n}\det(\nabla_{ij}h+h\delta_{ij})$, where $h, \rho$ are the support and radial functions of $K$. If the given measure $\mu$ has a density function $f$, then Problem \ref{p1.1} is equivalent to solving the following generalized Hessian equation
\begin{align}\label{1.0}
\frac{h^{1-p}}{(h^2+|\nabla h|^2)^{\frac{n-q}{2}}}\sigma_k(\nabla_{ij}h+h\delta_{ij})=f.
\end{align}

When $1\leq k<n-1$ and $q=n$, Equation (\ref{1.0}) is known as the $L_p$-Christoffel-Minkowski problem and is the classical Christoffel-Minkowski problem for $p=1$ (\cite{GM}). For the case of $p>1$ and $q=n$, Equation (\ref{1.0}) has been studied by Hu-Ma-Shen \cite{HMS} for $p\geq k+1$, and by Guan-Xia \cite{GX} for $1<p<k+1$ via also the constant rank theorem;
For the case of $p<1$, Chen \cite{C0} gave a uniqueness of solution via flows by powers of Gauss curvatures.
As far as we know, the existence for $p<1$ are unknown until now. An extreme case of Equation (\ref{1.0}) is $k=n-1$, which corresponds to the $L_p$ dual Minkowski problem introduced in \cite{HLY}, and then followed by \cite{CTW,CH,CL,L0} and the references therein. Moreover, the case of $k=n-1$ and $q=n$ is the $L_p$ Minkowski problem posed in \cite{L}.

In this paper, we confirm the existence of smooth solutions to the $(p,q)$-Christoffel-Minkowski problem by the curvature flow method.

Curvature flows of convex hypersurfaces in $\mathbb{R}^n$ by a class of speed functions of the principal curvatures have been extensively studied in the past decades. The well-known examples include the Gauss curvature flow (see e.g., \cite{C,F1}), and the mean curvature flow (see e.g., \cite{Hui}). The reason why geometers are interested in the study of the theory is that it has important applications in physics and mathematics. In recent years, the curvature flow technique has been proved to be effective and powerful in solving the Minkowski type problems (see, e.g., \cite{B1,CH,CW,FLX,I,LL,LS,LSW,SY}).
The essential idea behind the flow technique is the fact that the Minkowski type problems can be reformulated as a Monge-Amp\`{e}re type equation.

The main purpose of this paper is to construct a suitable expanding curvature flow of convex hypersurfaces in  $\mathbb{R}^n$, and prove its long-time existence and convergence smoothly to a smooth solution of Equation (\ref{1.0}).

Let $M_0$ be a smooth, closed, and strictly convex hypersurface containing the origin $o$ in its interior, that is, there is a sufficient small positive constant $\delta_o$ such that the $\delta_0$-neighbourhood of $o$ being with  $U(o,\delta_o)\subset M_0$. We now consider and write down an expanding flow as follows.
For a family of closed hypersurfaces $\{M_t\}$ given by $M_t=X(\mathbb{S}^{n-1},t)$,
where $X:\mathbb{S}^{n-1}\times [0,T)\rightarrow\mathbb{R}^n$ is the smooth map that satisfies
\begin{align}\label{1.1}
\begin{cases}
\frac{\partial X}{\partial t}(x,t)=\frac{1}{f(\nu)}\frac{\langle X,\nu\rangle^{1-p}}{|X|^{n-q}}\sigma_k(x,t)\nu-X(x,t),\\
X(x,0)=X_0(x),
\end{cases}
\end{align}
for integer $1\leq k\leq n-1$, where $f$ is a given positive smooth function on $\mathbb{S}^{n-1}$, $\nu$ is the unit outer normal vector of $M_t$, $\langle\cdot,\cdot\rangle$ is the standard inner product in $\mathbb{R}^n$, $\sigma_k$ is the $k$-th elementary symmetric function for principal curvature radii, and $T$ is the maximal time for which the solution of (\ref{1.1}) exists.

The following theorem obtains the long-time existence and convergence of the flow (\ref{1.1}). Moreover, we will derive that a positive homothetic self-similar solution of the flow (\ref{1.1}) satisfies Equation (\ref{1.0}).

\begin{theorem}\label{th1.1}
Let $M_0$ be a smooth, closed, and strictly convex hypersurface containing the origin o in its interior. Suppose $f$ is a positive smooth function on $\mathbb{S}^{n-1}$, and $p,q\in\mathbb{R}$ with $q<p$. Then the flow (\ref{1.1}) has a smooth, closed, and strictly convex solution $M_t$, which exists for any time $t\in[0,\infty)$.
Moreover, when $t\rightarrow\infty$, a subsequence of $M_t$ converges in $C^\infty$ to a smooth, closed, and strictly convex hypersurface $M_\infty$, and the support function of $M_\infty$ satisfies Equation (\ref{1.0}).
\end{theorem}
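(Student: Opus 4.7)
I would prove Theorem \ref{th1.1} following the standard short-time/a priori estimates/long-time/convergence scheme for parabolic Monge--Amp\`ere type equations, working throughout with the scalar support-function equation (\ref{1.4}) rather than with the position flow (\ref{1.3}). Because $h_0$ is the support function of a smooth strictly convex hypersurface, the operator $\sigma_k(h_{ij}+h\delta_{ij})$ is uniformly elliptic at $t=0$, so a standard linearization/implicit function argument yields a smooth, strictly convex solution on a maximal interval $[0,T^{*})$. The goal then is to show $T^{*}=\infty$ and to extract a smooth subsequential limit.

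The main task is a priori bounds, and the key step is the $C^{0}$ estimate $0<c_1\leq h(\cdot,t)\leq C_1$ on $[0,T^{*})$. Here the two functionals from Section 2 do the work: the preservation of $\Phi_{p,q}(t)\equiv\Phi_{p,q}(0)$ furnishes an integral identity, and the monotonicity $W_k(t)\geq W_k(0)$ furnishes a lower integral bound on $\int h\sigma_k\,dx$. Evaluating at max/min points of $h$ on $\mathbb{S}^{n-1}$ (where $\nabla h=0$ and hence $\rho=h$) produces pointwise inequalities whose sign depends on the exponents $p$, $n-q$ and $n+p-q$; the assumption $q\leq\min\{n,n+p\}$ is precisely what makes the upper and lower bound arguments both go through. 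Once $h$ is two-sided bounded, $|\nabla h|$ is controlled via the standard in-radius/out-radius estimates for convex bodies.

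For the $C^{2}$ estimate I would bound the principal radii $\lambda_i=\lambda_i(h_{ij}+h\delta_{ij})$ from above by a maximum-principle argument applied to an auxiliary quantity of the form $\log\sigma_k-A\log h$ (or $\sigma_1/h^{\beta}$ with $\beta$ tuned to the exponents in (\ref{1.4})), and from below by using the equation itself together with the already-known upper bound on $\sigma_k$, since $\det(h_{ij}+h\delta_{ij})$ is then controlled from below. With $\lambda_i$ uniformly bounded above and below, (\ref{1.4}) becomes uniformly parabolic with concave operator $F=\sigma_k^{1/k}$, so Krylov--Safonov together with Evans--Krylov gives uniform $C^{2,\alpha}$, and Schauder bootstrap gives uniform $C^{\infty}$ bounds. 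This forces $T^{*}=\infty$.

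Finally, for convergence, the upper $C^{0}$ bound makes $W_k(t)$ uniformly bounded above, and since $W_k$ is monotone non-decreasing, $W_k'(t)\in L^{1}(0,\infty)$. A direct computation using (\ref{1.5}) shows that $W_k'(t)$ dominates an $L^{2}$-type norm of $\eta(t)f^{-1}h^{1-p}\rho^{-(n-q)}\sigma_k-1$ against the measure $h\sigma_k\,dx$, so along a subsequence $t_j\to\infty$ this quantity tends to zero. Arzel\`a--Ascoli applied with the uniform $C^{\infty}$ bounds extracts $h(\cdot,t_j)\to\tilde h$ in $C^{\infty}$, and passing to the limit in (\ref{1.4}) with $\rho^{2}=h^{2}+|\nabla h|^{2}$ yields $\eta_\infty\,\tilde h^{1-p}\sigma_k(\tilde h_{ij}+\tilde h\delta_{ij})=f(\tilde h^{2}+|\nabla\tilde h|^{2})^{(n-q)/2}$, which is exactly (\ref{1.2}) with $c=\eta_\infty>0$. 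The main obstacle, as the authors themselves flag, is the $C^{0}$ lower bound: when $h$ is small the dissipative $-h$ term in (\ref{1.4}) is weak while the leading term may blow up or collapse depending on the signs of the exponents, and only the joint use of $\Phi_{p,q}$ and $W_k$ under the structural assumption $q\leq\min\{n,n+p\}$ closes the argument.
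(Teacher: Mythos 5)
Your overall architecture --- $C^0$ bounds obtained jointly from the conserved functional $\Phi_{p,q}$ and the monotone quermassintegral $W_k$ under $q\leq\min\{n,n+p\}$, the gradient bound from $\rho^2=h^2+|\nabla h|^2$, long-time existence from uniform parabolicity plus Krylov--Safonov/Evans--Krylov, and convergence by integrating $W_k'$ in time and invoking the equality case of H\"{o}lder's inequality to identify the limit equation with $c=\eta_\infty>0$ --- is essentially the paper's own scheme (Lemmas \ref{la2.1}, \ref{la2.2}, \ref{la3.1}--\ref{la3.3} and Section 4), and those parts of your plan are sound as sketched.

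The gap is in your $C^2$ step, exactly where the paper has to work hardest. You propose to get the lower bound on the principal radii ``from the equation itself together with the already-known upper bound on $\sigma_k$, since $\det(h_{ij}+h\delta_{ij})$ is then controlled from below.'' That deduction is valid only when $k=n-1$, where the speed involves the determinant. For $1\leq k<n-1$ a two-sided bound on $\sigma_k$ together with an upper bound on the radii does \emph{not} control the determinant: one eigenvalue of $(h_{ij}+h\delta_{ij})$ may tend to zero while $\sigma_k$ of the remaining ones stays bounded away from zero, so strict convexity can be lost. Moreover, along the parabolic flow (\ref{1.4}) $\sigma_k$ is not pointwise determined by $h$ (the term $\partial_t h$ enters), so even the two-sided bounds on $\sigma_k$ require separate parabolic maximum principles --- the paper's Lemmas \ref{la3.4} and \ref{la3.5}, with auxiliary functions $\log Q-A\rho^2/2$ and $Q/(1-\lambda\rho^2/2)$, the latter relying on the inverse-concavity inequality $\sigma_k^{ij}b_{im}b_{jm}\geq k\sigma_k^{1+1/k}$. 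The decisive missing ingredient in your proposal is the analogue of Lemma \ref{la3.6}: a maximum principle applied to the largest eigenvalue $b^{11}$ of the inverse radii matrix, again exploiting the inverse concavity of $\sigma_k^{1/k}$, which yields the upper bound on the principal curvatures (equivalently the lower bound on the radii); only afterwards does the upper bound on the radii follow cheaply, since $\sigma_k\geq\lambda_{\max}\,c^{k-1}$ once all radii are bounded below by $c$. (A smaller inaccuracy: a bound on $\log\sigma_k-A\log h$ controls $\sigma_k$, not $\lambda_{\max}$, when $k<n-1$; your $\sigma_1/h^{\beta}$ variant is the one that could control $\lambda_{\max}$.) Without this curvature bound your argument gives neither uniform $C^2$ control nor the strict convexity of $M_\infty$ asserted in the theorem.
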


As an application, we have

\begin{corollary}\label{co1.2}
Under the assumptions of Theorem \ref{th1.1}, there exists a smooth solution to Equation (\ref{1.0}). In other words, the $(p,q)$-Christoffel-Minkowski problem has a smooth solution.
\end{corollary}

\begin{remark}\label{rem1.3}
When $q=n$ in (\ref{1.0}), the existence of smooth solution to the $L_p$ Christoffel-Minkowski problem for $p\neq0$ is obtained follows Corollary \ref{co1.2}.
When $k=n-1$ in (\ref{1.0}), we obtain the existence of smooth solutions to the $L_p$ dual Minkowski problem for $p,q\in\mathbb{R}$ with $q<p$.
\end{remark}

Finally, we consider the uniqueness result of Equation (\ref{1.0}).

\begin{theorem}\label{th1.4}
If $p,q\in\mathbb{R}$ with $q<p$, then the solution to Equation (\ref{1.0}) is unique.
\end{theorem}

This paper is organized as follows.
The corresponding background materials and some results are introduced in Section \ref{S2}.
In Section \ref{S3}, we establish the priori estimates for the solution to the flow (\ref{1.1}).
In Section \ref{S4}, we complete the proof of Theorem \ref{th1.1}.
In Section \ref{S5}, we provide a uniqueness result of Equation (\ref{1.0}).

\section{ Preliminaries}\label{S2}

\subsection{Convex hypersurfaces}

In this subsection, we list some facts about convex hypersurfaces that readers can refer to \cite{U} and two good books of Gardner and Schneider \cite{G,S}.
Let $\mathbb{R}^n$ be the $n$-dimensional Euclidean
space, $\mathbb{S}^{n-1}$ be the unit sphere in $\mathbb{R}^n$.
For $K\in\mathcal{K}_o^n$, the support function $h:
\mathbb{S}^{n-1}\rightarrow\mathbb{R}$ of $K$ is defined by
\begin{align*}
h(x)=\max\{\langle x,Y\rangle,\ Y\in K\},\  x\in\mathbb{S}^{n-1}.
\end{align*}
The radial function of $K$ be $\rho$ and defined as
\begin{align*}
\rho(u)=\max\{\lambda>0,\lambda u\in K\},\ \ u\in \mathbb{S}^{n-1}.
\end{align*}

Let $M$ be a smooth, closed, strictly convex hypersurface containing the origin $o$ in its interior in $\mathbb{R}^n$. Denote the Gauss map of $M$ by $g_M$.
Assume that $M$ is given by the smooth map $X:
\mathbb{S}^{n-1}\rightarrow M\subset\mathbb{R}^n$ with $X(x)=g_M^{-1}(x)$.
The maximun of $h(x)$ is attained at the end of $Y$, hence
\begin{align}\label{2.0}
h(x)=\langle x,X(x)\rangle, \ \ x\in\mathbb{S}^{n-1},
\end{align}

Let $e_{ij}$ be the standard metric of the unit sphere $\mathbb{S}^{n-1}$,
and $\nabla$ be the gradient on $\mathbb{S}^{n-1}$.
Differentiating (\ref{2.0}), we have
$$\nabla_ih=\langle\nabla_ix,X(x)\rangle+\langle x,\nabla_iX(x)\rangle,$$
since $\nabla_iX(x)$ is tangent to $M$ at $X$, then
$$\nabla_ih=\langle\nabla_ix,X(x)\rangle.$$
It follows that
\begin{align}\label{2.01}
X(x)=\nabla h(x)+h(x)x.
\end{align}
From (\ref{2.01}), $u$ and $x$ are related by
\begin{align}\label{2.02}
\rho(u)u=h(x)x+\nabla h(x).
\end{align}

By differentiating (\ref{2.0}) twice, the second fundamental form of $M$ is given  by
\begin{align}\label{2.1}
A_{ij}=\nabla_{ij}h+he_{ij},
\end{align}
where $\nabla_{ij}=\nabla_i\nabla_j$ denotes the second order covariant derivative with respect to $e_{ij}$. The induced metric matix $g_{ij}$ of $M$ can be derived by Weingarten's formula,
\begin{align}\label{2.2}
e_{ij}=\langle\nabla_ix,\nabla_jx\rangle=A_{ik}A_{lj}g^{kl}.
\end{align}
It follows from (\ref{2.1}) and (\ref{2.2}) that the principal radii of curvature of $M$, under a smooth local orthonormal frame on $\mathbb{S}^{n-1}$, are the eigenvalues of the matrix
\begin{align}\label{2.21}
b_{ij}=\nabla_{ij}h+h\delta_{ij}.
\end{align}
Since $M$ be a smooth, closed, strictly convex hypersurface containing the origin in its interior, then $h(M,\cdot)>0$, and the eigenvalues of $b_{ij}$ are positive. Thus $b_{ij}$ is positive definite. Further, we will use $b^{ij}$ to denote the inverse matrix of $b_{ij}$.

\subsection{Curvature flow and its associated functional}

By the definition of support function, it is easy for us to see
$h(x,t)=\langle x,X(x,t)\rangle$. From the evolution equation of
$X(x,t)$ in (\ref{1.1}), we derive the parameterized  evolution
equation by the corresponding support function $h(x,t)$ as
\begin{align}\label{2.3}
\frac{\partial h}{\partial t}(x,t)=\frac{1}{f(x)}\frac{h^{2-p}}{\rho^{n-q}}
\sigma_k(x,t)-h(x,t).
\end{align}
Let $x$ be expressed as $x=x(u,t)$, by (\ref{2.02}), we get
\begin{align*}
\log\rho(u,t)=\log h(x,t)-\log\langle x,u\rangle.
\end{align*}
Differentiating the above identity, it is easy to see that there
holds
\begin{align}\label{2.3.1}
\frac{1}{\rho(u,t)}\frac{\partial\rho(u,t)}{\partial t}=\frac{1}{h(x,t)}\frac{\partial h(x,t)}{\partial t}.
\end{align}
Therefore, by (\ref{2.3.1}), the flow (\ref{1.1}) can be aslo described by the following scalar equation for $\rho(u,t)$.
\begin{align}\label{2.3.2}
\frac{\partial \rho}{\partial t}(u,t)=\frac{1}{f(u)}\frac{h^{1-p}}{\rho^{n-q-1}}\sigma_k(u,t)-\rho(u,t).
\end{align}

\section{ A priori estimation }\label{S3}
In this section, we give the $C^0$, $C^1$ and $C^2$-estimates for
the solution to the equation (\ref{2.3}).

\subsection{$C^0$, $C^1$-Estimates}

The following Lemma shows the $C^0$-estimate.

\begin{lemma}\label{la3.1}
Let $h(\cdot,t)$ be a smooth solution of (\ref{2.3}) on $\mathbb{S}^{n-1}\times[0,T)$.
Suppose $f$ is a positive smooth function on $\mathbb{S}^{n-1}$, and
 $p,q\in\mathbb{R}$ with $q<p$. Then there holds
\begin{align}\label{3.1}
C_1\leq h(\cdot,t)\leq C_2,
\end{align}
and
\begin{align}\label{3.2}
C_1\leq \rho(\cdot,t)\leq C_2,
\end{align}
where $C_1$ and $C_2$ are positive constants independent of $t$.
\end{lemma}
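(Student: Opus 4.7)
The plan is to first reduce the bounds on $\rho$ to bounds on $h$, then apply the maximum and minimum principles to (\ref{2.3}) using the fact that the auxiliary scalar $\eta(t)$ is controlled by the invariants from Lemmas \ref{la2.1} and \ref{la2.2}. Since $M_t$ is a convex body containing the origin in its interior, one has $\max_{\mathbb{S}^{n-1}}h(\cdot,t)=\max_{\mathbb{S}^{n-1}}\rho(\cdot,t)$ and $\min_{\mathbb{S}^{n-1}}h(\cdot,t)=\min_{\mathbb{S}^{n-1}}\rho(\cdot,t)$ (both extrema are attained at directions where $\nabla h=0$, so that $X=hx$ and hence $\rho=h$ there; the pointwise inequality $h\geq\rho$ then pins down the equality of the extrema). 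In particular $\rho(u,t)\geq h_{\min}(t):=\min h(\cdot,t)$ for every $u$, so it suffices to prove uniform positive upper and lower bounds for $h_{\max}(t):=\max h(\cdot,t)$ and $h_{\min}(t)$.

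By Lemma \ref{la2.2}, $\Phi_{p,q}(t)\equiv C_0:=\Phi_{p,q}(0)$, and by Lemma \ref{la2.1}, $W_k(M_t)\geq W_k(M_0)>0$, so (\ref{1.5}) gives the uniform upper bound $\eta(t)\leq\eta_0:=C_0/(nW_k(M_0))$. At a point $x_0(t)$ realising $h_{\max}(t)$, $\nabla h=0$ forces $\rho=h_{\max}$ there, while $(\nabla_{ij}h)(x_0,t)\leq 0$ gives $b_{ij}\leq h_{\max}\delta_{ij}$, whence $\sigma_k\leq\binom{n-1}{k}h_{\max}^k$; plugging this into (\ref{2.3}) and using $\eta\leq\eta_0$ produces the ODE inequality
$$\frac{d}{dt}h_{\max}(t)\leq\frac{\eta_0\binom{n-1}{k}}{\min_{\mathbb{S}^{n-1}}f}\,h_{\max}(t)^{k+2-p-(n-q)}-h_{\max}(t).$$
The analogous minimum-principle computation at the minimum point $x_1(t)$, using $\nabla^2h(x_1,t)\geq 0$ and $\sigma_k\geq\binom{n-1}{k}h_{\min}^k$ there, yields
$$\frac{d}{dt}h_{\min}(t)\geq\frac{\eta(t)\binom{n-1}{k}}{\max_{\mathbb{S}^{n-1}}f}\,h_{\min}(t)^{k+2-p-(n-q)}-h_{\min}(t).$$
The hypothesis $q\leq n$ ($n-q\geq 0$) is what allows $\rho^{n-q}$ to be sandwiched between $h_{\min}^{n-q}$ and $h_{\max}^{n-q}$, while $q\leq n+p$ ($p+n-q\geq 0$) makes the exponents admissible in $\Phi_{p,q}$. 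Once $h_{\max}$ is bounded above, the inclusion $M_t\subset B_{h_{\max}}$ gives $W_k(M_t)\leq c\,h_{\max}^{n-k}$, so $\eta(t)\geq c_1>0$, and the second ODI then delivers a positive lower bound on $h_{\min}$, closing the loop.

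The main obstacle is that the exponent $\alpha:=k+2-p-(n-q)$ can be $\geq 1$ in part of the admissible range (namely when $q>n+p-k-1$), in which case the first ODI alone does not prevent blow-up. I would handle this by a contradiction argument: if $h_{\max}(t_j)\to\infty$, then the convex inequality $h(x,t_j)\geq h_{\max}(t_j)\langle x,x_0(t_j)\rangle$ produces a spherical cap around $x_0(t_j)$ of fixed angular size on which $h\geq h_{\max}/2$; combining this with the pointwise bound $\rho\geq h_{\min}$ and exploiting the signs of $p$ and $n-q$ forced by $q\leq\min\{n+p,n\}$ converts $\Phi_{p,q}(t)\equiv C_0$ into a relation of the form $h_{\max}^A\,h_{\min}^B\leq C$ with $A,B>0$. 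Paired with the lower bound on $h_{\min}$ obtained from the second ODI (once $\eta$ is bounded from below), this contradicts $h_{\max}\to\infty$. Interweaving this contradiction argument with the two ODE comparisons, so that the bounds on $h_{\max}$, $h_{\min}$ and $\eta$ are all derived consistently, is the technical core of the lemma.
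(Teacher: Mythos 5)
Your reduction to bounds on $h$, your upper bound on $\eta(t)$ (constancy of $\Phi_{p,q}$ from Lemma \ref{la2.2} divided by the monotone $W_k$ from Lemma \ref{la2.1}), and your minimum-principle ODI at the point where $h$ attains its minimum all match the paper's proof. But the crux of the lemma --- the uniform upper bound on $h_{\max}(t)$ --- is not actually established in your proposal. You concede that the maximum-principle ODI fails whenever the exponent $k+2-p-(n-q)\geq 1$, and your fallback contradiction argument has two concrete problems. First, by lower-bounding $\rho$ only by $h_{\min}$ you claim a relation $h_{\max}^{A}h_{\min}^{B}\leq C$ with $A,B>0$; but the hypothesis $q\leq\min\{n+p,n\}$ does not force $p>0$, and when $p<0$ the factor $h^{p}$ on your cap contributes a \emph{negative} power of $h_{\max}$, so no such relation with $A>0$ follows. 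Second, even where the relation holds, your scheme is circular: the bound on $h_{\max}$ needs the lower bound on $h_{\min}$, which needs the lower bound on $\eta$, which in your setup needs the upper bound on $h_{\max}$. You explicitly defer ``interweaving'' these, but that interweaving is exactly the content of the lemma, so the proof is incomplete where it matters.

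The paper avoids both issues by bounding $h_{\max}$ \emph{directly} from the conserved functional, with no reference to $h_{\min}$ or $\eta$: since $q\leq n$ and $\rho(u,t)\geq h(x,t)$ at corresponding points, $\Phi_{p,q}(0)=\int f\,h^{p}\rho^{\,n-q}dx\geq\int f\,h^{\,n+p-q}dx$ (note the combined exponent $n+p-q\geq0$ absorbs the sign of $p$), and then the support-function inequality $h(x,t)\geq R_t\langle x,u_t\rangle$ on a fixed spherical cap gives $\Phi_{p,q}(0)\geq c_0(R_t\widetilde{x})^{\,n+p-q}|\widetilde{S}|$, bounding $R_t=h_{\max}(t)$ at once. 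Only afterwards does the paper bound $\eta$ from below (via $h_{\min}^{k+1}\leq\int h\sigma_k dx/\omega_{n-1}\leq h_{\max}^{k+1}$ and constancy of $\Phi_{p,q}$) and run the minimum principle for $h_{\min}$, so there is no loop to close. If you replace $\rho\geq h_{\min}$ by $\rho\geq h$ in your cap estimate, your contradiction argument collapses to the paper's direct one and the circularity disappears; as written, however, the upper bound is missing. (A minor further slip: with the paper's normalization $W_k(M_t)=\frac1n\int h\sigma_k dx$ is homogeneous of degree $k+1$, so the inclusion $M_t\subset B_{h_{\max}}$ gives $W_k\leq c\,h_{\max}^{k+1}$, not $c\,h_{\max}^{n-k}$.)
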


\begin{proof}
From the definitions of support function and radial function, we know
$$\min_{\mathbb{S}^{n-1}}h(x,t)
=\min_{\mathbb{S}^{n-1}}\rho(u,t),\ \
\max_{\mathbb{S}^{n-1}}h(x,t)
=\max_{\mathbb{S}^{n-1}}\rho(u,t).$$
This implies that (\ref{3.1}) and (\ref{3.2}) are equivalent. Thus, for upper bound (or lower bound), we only need to establish (\ref{3.1}) or (\ref{3.2}).

Suppose the spatial minimum of $h(x,t)$ is attained at a point $(x_0,t)$ for each $t\in[0,T)$. At this point, we have
$$\nabla_ih=0,\ \ \nabla_{ij}h\geq0,\ \ and \ \ \rho=h.$$
It follows that $\sigma_k\geq h_{\min}^k$.
Under the assumptions of $f$, there is
$$\partial_th_{\min}(t)\geq\frac{1}{f}
h^\alpha_{\min}-h_{\min}
\geq h_{\min}((c^{\frac{1}
{1-\alpha}}h_{\min})^{\alpha-1}-1),$$
where $\alpha=k+q+2-n-p$.
Hence $h_{\min}\geq\min\{1,c^{\frac{1}
{1-\alpha}}h_{\min}(0)\}$.

Suppose the spatial maximum of $h(x,t)$ is attained at a point $(x^0,t)$ for each $t\in[0,T)$. At this point, we have
$$\nabla_ih=0,\ \ \nabla_{ij}h\leq0,\ \ and \ \ \rho=h.$$
It follows that $\sigma_k\leq h_{\max}^k$.
Similarly,
$$\partial_th_{\max}(t)\leq\frac{1}{f}
h^\alpha_{\max}-h_{\max}
\leq h_{\max}((c^{\frac{1}{\alpha-1}}
h_{\max})^{\alpha-1}-1).$$
Hence $h_{\max}\leq\min\{1,c^{\frac{1}{\alpha-1}}h_{\max}(0)\}$.
\end{proof}

Since the convexity of $M_t$, combining with Lemma \ref{la3.1}, we can obtain the $C^1$-estimate as follows.
\begin{lemma}\label{la3.2}
Under the assumptions of Lemma \ref{la3.1}, we have
$$|\nabla h(\cdot,t)|\leq C,$$
where $C$ is a positive constant depending only on the constant in Lemma \ref{la3.1}.
\end{lemma}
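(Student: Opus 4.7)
The plan is to reduce the gradient bound immediately to the $C^0$ bounds already established in Lemma \ref{la3.1} by exploiting the Gauss parametrization of $M_t$. Recall the identity \eqref{2.01}, namely $X(x,t) = \nabla h(x,t) + h(x,t)\,x$. Since $\nabla$ denotes the gradient on $\mathbb{S}^{n-1}$, we have $\nabla h(x,t) \in T_x\mathbb{S}^{n-1}$, so $\langle \nabla h, x\rangle = 0$ pointwise. Taking squared Euclidean norms then yields
$$|X(x,t)|^2 \;=\; h(x,t)^2 + |\nabla h(x,t)|^2.$$

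On the other hand, writing $u = X(x,t)/|X(x,t)| \in \mathbb{S}^{n-1}$ for the radial direction of the same boundary point, the definition of the radial function gives $|X(x,t)| = \rho(u,t)$. Combining the two expressions produces the pointwise identity
$$|\nabla h(x,t)|^2 \;=\; \rho(u,t)^2 - h(x,t)^2.$$
By Lemma \ref{la3.1}, $\rho(u,t) \leq C_2$ uniformly in $(u,t)$, so $|\nabla h(x,t)| \leq \rho(u,t) \leq C_2$ for every $(x,t) \in \mathbb{S}^{n-1}\times[0,T)$, which is exactly the desired estimate with $C = C_2$. There is no genuine difficulty to overcome here: convexity of $M_t$ is used only through the validity of the parametrization $X = \nabla h + h x$, and the bound falls out of the $C^0$ estimate together with the orthogonality $\nabla h \perp x$. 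The only small point to verify is that the same boundary point $X(x,t)$ is simultaneously described by $x$ (via the Gauss map) and by $u = X/|X|$ (via the radial map), which follows from $M_t$ being smooth, closed, strictly convex, and containing the origin in its interior uniformly in $t$ (as ensured by the lower bound $h \geq C_1 > 0$).
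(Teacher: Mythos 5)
Your proof is correct and is essentially the paper's own argument: both rely on the identity $\rho^2 = h^2 + |\nabla h|^2$ coming from $X = \nabla h + h\,x$ (equivalently \eqref{2.02}) together with the uniform upper bound on $\rho$ from Lemma \ref{la3.1}, giving $|\nabla h| \leq \rho \leq C_2$.
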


\begin{proof}
From the equality (\ref{2.02}), we can get
$$\rho^2=h^2+|\nabla h|^2.$$
By virtue of the Lemma \ref{la3.1}, there is
$$|\nabla h|\leq\rho.$$
So we directly obtain the estimate of this lemma.
\end{proof}

\subsection{$C^2$-Estimate}

In this subsection, we will establish the upper and lower bounds of principal curvatures. This shows that the equation (\ref{2.3}) is uniformly parabolic.
We first start from the lower bound of $\sigma_k(\cdot,t)$.

\begin{lemma}\label{la3.4}
Under the assumptions of Lemma \ref{la3.1}.  Then
$\sigma_k(\cdot,t)\geq C_3$ for a positive constant $C_3$ independent of $t$.
\end{lemma}

\begin{proof}
In order to obtain the lower bound of $\sigma_k$, we will apply the maximum principle
to the following auxiliary function, a similar function is considered in \cite{KS},
$$\Theta(x,t)=\log Q-A\frac{\rho^2}{2},$$
where $A>0$ is a constant to be determined later and $Q=N\sigma_k=\frac{\sigma_k}{f(x)}h^{2-p}\rho^{q-n}$.
The evolution equation of $\Theta$ reads as
\begin{align}\label{3.3}
\partial_t\Theta=\frac{1}{Q}\partial_tQ-A\partial_t\bigg(\frac{\rho^2}{2}\bigg).
\end{align}

We now calculate the evolution equation of $Q$
\begin{align*}
\partial_tQ=N\partial_t\sigma_k+\sigma_k\partial_tN.
\end{align*}
Here
\begin{align*}
\sigma_k\partial_tN&=\bigg[\frac{(2-p)h^{1-p}
\partial_th}{f\rho^{n-q}}-
\frac{(n-q)h^{2-p}f\rho^{n-q-1}
\partial_t\rho}{(f\rho^{n-q})^2}\bigg]\sigma_k\\
&=\bigg[\frac{(2-p)h^{1-p}}{f\rho^{n-q}}
-\frac{(n-q)h^{1-p}}{f\rho^{n-q}}\bigg]\sigma_k\partial_th\\
&=\beta\frac{Q}{h}(Q-h),
\end{align*}
where $\beta=q+2-n-p$.
From (\ref{2.21}), and denote partial derivatives $\partial\sigma_k/\partial b_{ij}$ by $\sigma_k^{ij}$, then we get
\begin{align*}
\partial_t\sigma_k=\frac{\partial\sigma_k}{\partial b_{ij}}\frac{\partial b_{ij}}{\partial t}=\sigma^{ij}_k\partial_t(\nabla_{ij}h+h\delta_{ij}),
\end{align*}
Thus we arrive at
\begin{align*}
\partial_t\sigma_k
&=\sigma^{ij}_k\nabla_{ij}(\partial_th)
+\sigma^{ij}_k\delta_{ij}(\partial_th)\\
&=\sigma^{ij}_k\nabla_{ij}(Q-h)
+\sigma^{ij}_k\delta_{ij}(Q-h)\\
&=\sigma^{ij}_k\nabla_{ij}Q
+\sigma^{ij}_k\delta_{ij}Q-\sigma^{ij}_k(\nabla_{ij}h
+h\delta_{ij})\\
&=\sigma^{ij}_k\nabla_{ij}Q
+\sigma^{ij}_k\delta_{ij}Q-k\sigma_k,
\end{align*}
the last equality holds because $\sigma_k$ is homogeneous of degree $k$ and $\sigma_k^{ij}b_{ij}=k\sigma_k$.

Hence
\begin{align}\label{3.4}
\partial_tQ=N\sigma_k^{ij}\nabla_{ij}Q+N\sigma_k^{ij}
\delta_{ij}Q-(k+\beta)Q+\beta\frac{Q^2}{h}.
\end{align}

Recalling that $\rho^2=h^2+|\nabla h|^2$, we have
\begin{align}\label{3.5}
\nonumber\partial_t\bigg(\frac{\rho^2}{2}\bigg)
&=\partial_t\bigg(\frac{h^2}{2}\bigg)
+\partial_t\bigg(\frac{|\nabla h|^2}{2}\bigg)\\
&=h\partial_th+\nabla_ih\nabla_i(\partial_th)\\
&\nonumber=hQ+\nabla_ih\nabla_iQ-\rho^2.
\end{align}
Combines (\ref{3.4}) and (\ref{3.5}),  we have
\begin{align*}
\partial_t\Theta&=\frac{N}{Q}\sigma_k^{ij}\nabla_{ij}Q
+\frac{N}{Q}\sigma_k^{ij}\delta_{ij}Q-(k+\beta)
+\beta\frac{Q}{h}\\
&\ \ -AhQ-A\nabla_ih\nabla_iQ+A\rho^2.
\end{align*}
Suppose the spatial minimum of $\Theta$ is attained at a point $(\tilde{x},t)$, then $\nabla_{ij}Q\geq0$. At point $(\tilde{x},t)$,
dropping some positive terms and rearranging terms yield
\begin{align*}
\partial_t\Theta&\geq
A\rho^2-(k+\beta)+\beta\frac{Q}{h}
-AhQ-A\nabla_ih\nabla_iQ\\
&=\frac{A}{2}\rho^2-(k+\beta)
+\beta\frac{1}{h}e^{\Theta+A\frac{\rho^2}{2}}\\
&\ \ +A\bigg(\frac{\rho^2}
{2e^{\Theta+A\frac{\rho^2}{2}}}
-hQ-\nabla_ih\nabla_iQ\bigg).
\end{align*}
Now choose $A>\frac{2}{\rho^2}(k+\beta)$. Thus if $\Theta$ becomes very negative then the right-hand side
becomes positive, and the lower bound of $\Theta$ follows.
\end{proof}

\begin{lemma}\label{la3.5}
Under the assumptions of Lemma \ref{la3.1}, then
$\sigma_k(\cdot,t)\leq C_4$,
where $C_4$ is a positive constant independent of $t$.
\end{lemma}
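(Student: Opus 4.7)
The strategy mirrors that of Lemma 3.4, but applies the parabolic maximum principle (rather than the minimum principle) to an auxiliary function tailored for an upper bound on $Q=N\sigma_k=\frac{h^{2-p}\rho^{q-n}}{f(x)}\sigma_k$. Concretely, I would set
$$\Psi(x,t) := \log Q + \frac{A\rho^2}{2},$$
with $A>0$ a constant to be fixed below. Assume $\Psi$ attains its spacetime maximum over $\mathbb{S}^{n-1}\times[0,T']$ at $(x_0,t_0)$ with $t_0>0$. The first-order condition $\nabla_i\Psi(x_0,t_0)=0$ gives $\nabla_iQ/Q=-A\rho\nabla_i\rho$, while $\nabla_{ij}\Psi(x_0,t_0)\leq 0$ combined with the positivity of $(\sigma_k^{ij})$ yields $\sigma_k^{ij}\nabla_{ij}\Psi\leq 0$, and $\partial_t\Psi(x_0,t_0)\geq 0$.

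Combining these sign conditions with the evolution identities (3.6) for $Q$ and (3.7) for $\rho^2/2$ already established in Lemma 3.4, one obtains at $(x_0,t_0)$ an inequality of schematic form
$$0\leq (n-k+1)N\eta\sigma_{k-1} -(k+\beta)+\beta\frac{Q\eta}{h}+N\eta\sigma_k^{ij}\frac{\nabla_iQ\nabla_jQ}{Q^2}+A(k+1)hN\eta\sigma_k-A\rho^2+A\eta\sigma_k\nabla_ih\nabla_iN - AN\eta\sigma_k^{ij}b_{mi}b_{mj}.$$
The decisive ingredient is a superlinear lower bound on the coercive term. In diagonal coordinates for $b_{ij}$ with eigenvalues $\lambda_i>0$, the weighted Cauchy--Schwarz inequality applied to $\sum_i\lambda_i^2\sigma_{k-1}(\lambda|i)$, together with the identity $\sum_i\lambda_i\sigma_{k-1}(\lambda|i)=k\sigma_k$ and the Maclaurin-type inequality $\sigma_{k-1}\leq C\sigma_k^{(k-1)/k}$, yields
$$\sigma_k^{ij}b_{mi}b_{mj}\geq\frac{k^2\sigma_k^2}{(n-k+1)\sigma_{k-1}}\geq c\,\sigma_k^{(k+1)/k}.$$
Since the uniform bounds of Lemmas 3.1--3.3 ensure every other leading term on the right-hand side is $O(\sigma_k)$ or $O(\sigma_k^{(k-1)/k})$, the superlinear growth of $-AN\eta\sigma_k^{ij}b_{mi}b_{mj}$ forces $\sigma_k(x_0,t_0)\leq C$ for a constant independent of $t$. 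Boundedness of $\rho$ then gives $\Psi\leq C'$ everywhere, hence the uniform upper bound $\sigma_k\leq C''$.

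The main technical obstacle is the nonnegative gradient-squared term $N\eta\sigma_k^{ij}\nabla_iQ\nabla_jQ/Q^2$, which cannot be discarded. Using the critical-point relation $\nabla_iQ/Q=-A\rho\nabla_i\rho$ together with the identity $\rho\nabla_i\rho=b_{ij}\nabla_jh$, this term rewrites as $A^2N\eta\sigma_k^{ij}b_{im}b_{jl}\nabla_mh\nabla_lh$, which by the $C^1$ estimate $|\nabla h|\leq C$ of Lemma 3.2 is bounded above by $A^2C^2N\eta\sigma_k^{ij}b_{mi}b_{mj}$. Choosing $A$ small enough that $AC^2\leq \tfrac{1}{2}$ lets the coercive term absorb the bad gradient term, leaving $-\tfrac{A}{2}N\eta\sigma_k^{ij}b_{mi}b_{mj}$ intact to drive the superlinear contradiction described above.
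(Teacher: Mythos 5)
Your overall scheme is the same as the paper's: apply the maximum principle to an auxiliary function built from $Q=N\sigma_k$ and $\rho^2$, use the evolution identities (3.6)--(3.7), and let a superlinear term $\sigma_k^{ij}b_{mi}b_{mj}$ drive the contradiction (the paper uses the ratio $\mathcal{P}=Q/(1-\lambda\rho^2/2)$ rather than $\log Q+A\rho^2/2$, and your treatment of the gradient term at the critical point via $\rho\nabla_i\rho=b_{ij}\nabla_jh$ and a small $A$ is fine; these differences are cosmetic). The genuine gap is in the step that produces the coercivity. You claim
\begin{equation*}
\sigma_k^{ij}b_{mi}b_{mj}\;\geq\;\frac{k^2\sigma_k^2}{(n-k)\,\sigma_{k-1}}\;\geq\;c\,\sigma_k^{1+\frac1k},
\end{equation*}
where the first inequality (Cauchy--Schwarz, with constant $n-k$ rather than your $n-k+1$ since there are $n-1$ eigenvalues) is correct, but the second requires $\sigma_{k-1}\leq C\,\sigma_k^{(k-1)/k}$, which you call a Maclaurin-type inequality. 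Maclaurin's inequality goes the \emph{other} way: after normalization $\sigma_{k-1}\gtrsim\sigma_k^{(k-1)/k}$, and the reverse bound is false for general positive principal radii. For radii $(M,1,\dots,1)$ the scale-invariant ratio $\sigma_{k-1}/\sigma_k^{(k-1)/k}$ behaves like $M^{1/k}\to\infty$; such unpinched configurations are exactly what the $C^2$ estimate is meant to exclude, so they cannot be assumed away at this stage (only $C^0$, $C^1$ bounds, bounds on $\eta$, and the lower bound on $\sigma_k$ from Lemma 3.4 are available). The same false inequality is hidden in your assertion that ``every other leading term is $O(\sigma_k)$ or $O(\sigma_k^{(k-1)/k})$'': the term $N\eta\,\sigma_k^{ij}\delta_{ij}=(n-k)N\eta\,\sigma_{k-1}$ is of order $\sigma_{k-1}$, which is not controlled by any power of $\sigma_k$ without additional information.

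The paper gets the coercive term by a different mechanism: the inverse concavity of $\sigma_k^{1/k}$ (Andrews--McCoy--Zheng) gives directly $\sigma_k^{ij}b_{im}b_{jm}\geq k\,\sigma_k^{1+\frac1k}$ up to a dimensional constant, and this is what yields the final differential inequality $\partial_t\mathcal{P}\leq c_1\mathcal{P}+c_2\mathcal{P}^2-c_3\mathcal{P}^{2+\frac1k}$ and hence the bound. So the inequality you ultimately need is true, but not by your route; to repair the argument you must replace the Cauchy--Schwarz/reverse-Maclaurin chain by the inverse-concavity inequality, and you must still treat the $\sigma_{k-1}$-term coming from $\sigma_k^{ij}\delta_{ij}$ explicitly rather than declaring it lower order. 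As written, the proposal does not prove the lemma.
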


\begin{proof}
We will apply the maximum principle to the auxiliary function
$$\mathcal{P}(x,t)=\frac{1}{1-\lambda\frac{\rho^2}{2}}
\frac{N\sigma_k}{h},$$
where $\lambda$ is a positive constant such that $\lambda\leq\rho^2\leq\frac{1}{\lambda}$ for all $t>0$ (know from Lemma \ref{la3.1}).
Suppose the spatial maximum of $\mathcal{P}$ is obtained at $(\hat{x},t)$. Then, at $(\hat{x},t)$,
\begin{align}\label{3.6}
\nabla_i\mathcal{P}=0,\ \ i.e.,\
\nabla_i\bigg(\frac{N\sigma_k}{h}\bigg)+\frac{N\sigma_k}{h}
\frac{\lambda}{1-\lambda\frac{\rho^2}{2}}
\nabla_i\bigg(\frac{\rho^2}{2}\bigg)=0,
\end{align}
and
\begin{align}\label{3.7}
\nabla_{ij}\mathcal{P}\leq0.
\end{align}

Now we estimate $\mathcal{P}$, using (\ref{3.7}), we have
\begin{align}\label{3.8}
\nonumber\partial_t\mathcal{P}&\leq\partial_t\mathcal{P}
-N\sigma_k^{ij}\nabla_{ij}\mathcal{P}\\
\nonumber&=\partial_t\bigg(\frac{1}{1-\lambda\frac{\rho^2}{2}}
\frac{N\sigma_k}{h}\bigg)
-N\sigma_k^{ij}\nabla_{ij}\bigg(\frac{1}{1-\lambda
\frac{\rho^2}{2}}\frac{N\sigma_k}{h}\bigg)\\
\nonumber&=\frac{1}{1-\lambda\frac{\rho^2}{2}}
\bigg[\partial_t\bigg(\frac{N\sigma_k}{h}\bigg)
-N\sigma_k^{ij}\nabla_{ij}\bigg(\frac{N\sigma_k}{h}\bigg)\bigg]\\
&\ \ +\frac{\lambda}{\bigg(1-\lambda\frac{\rho^2}{2}\bigg)^2}
\frac{N\sigma_k}{h}\bigg[\partial_t\bigg(\frac{\rho^2}{2}\bigg)
-N\sigma_k^{ij}\nabla_{ij}\bigg(\frac{\rho^2}{2}\bigg)\bigg]\\
\nonumber&\ \ -2N\sigma_k^{ij}
\frac{\lambda}{\bigg(1-\lambda\frac{\rho^2}{2}\bigg)^2}
\nabla_i\bigg(\frac{N\sigma_k}{h}\bigg)
\nabla_j\bigg(\frac{\rho^2}{2}\bigg)\\
\nonumber&\ \ -2N\sigma_k^{ij}
\frac{\lambda^2}{\bigg(1-\lambda\frac{\rho^2}{2}\bigg)^3}
\frac{N\sigma_k}{h}\nabla_i\bigg(\frac{\rho^2}{2}\bigg)
\nabla_j\bigg(\frac{\rho^2}{2}\bigg).
\end{align}
Substituting (\ref{3.6}) into (\ref{3.8}), we have
\begin{align}\label{3.9}
\nonumber\partial_t\mathcal{P}&\leq
\frac{1}{1-\lambda\frac{\rho^2}{2}}
\bigg[\partial_t\bigg(\frac{N\sigma_k}{h}\bigg)
-N\sigma_k^{ij}\nabla_{ij}\bigg(\frac{N\sigma_k}{h}\bigg)\bigg]\\
&\ \ +\frac{\lambda}{\bigg(1-\lambda\frac{\rho^2}{2}\bigg)^2}
\frac{N\sigma_k}{h}\bigg[\partial_t\bigg(\frac{\rho^2}{2}\bigg)
-N\sigma_k^{ij}\nabla_{ij}\bigg(\frac{\rho^2}{2}\bigg)\bigg].
\end{align}

We need to calculate $\partial_t(\frac{N\sigma_k}{h})
-N\sigma_k^{ij}\nabla_{ij}(\frac{N\sigma_k}{h})$ and $\partial_t(\frac{\rho^2}{2})
-N\sigma_k^{ij}\nabla_{ij}(\frac{\rho^2}{2})$.
\begin{align*}
\partial_t(N\sigma_k)&=\sigma_k\partial_tN+N\partial_t\sigma_k\\
&=\frac{\sigma_k}{f}\bigg(\frac{(2-p)h^{1-p}}{\rho^{n-q}}
\partial_th-\frac{(n-q)h^{1-p}}{\rho^{n-q}}
\partial_th\bigg)\\
&\ \ +N(\sigma_k^{ij}\nabla_{ij}(N\sigma_k)+\sigma_k^{ij}
\delta_{ij}(N\sigma_k)-k\sigma_k)\\
&=\beta\frac{(N\sigma_k)^2}{h}-\beta N\sigma_k
+N(\sigma_k^{ij}\nabla_{ij}(N\sigma_k)+\sigma_k^{ij}
\delta_{ij}(N\sigma_k)-k\sigma_k),
\end{align*}
and
\begin{align*}
\nabla_{ij}\bigg(\frac{N\sigma_k}{h}\bigg)
&=\frac{1}{h}\nabla_{ij}(N\sigma_k)
-\frac{1}{h^2}(N\sigma_k)\nabla_{ij}h\\
&\ \ -\frac{2}{h^2}\nabla_i(N\sigma_k)\nabla_jh
+\frac{2}{h^3}(N\sigma_k)\nabla_ih\nabla_jh.
\end{align*}
Hence
\begin{align*}
&\partial_t\bigg(\frac{N\sigma_k}{h}\bigg)
-N\sigma_k^{ij}\nabla_{ij}\bigg(\frac{N\sigma_k}{h}\bigg)\\
&=\frac{1}{h}\partial_t(N\sigma_k)-\frac{1}{h^2}N\sigma_k
\partial_th-N\sigma_k^{ij}\nabla_{ij}
\bigg(\frac{N\sigma_k}{h}\bigg)\\
&=\frac{1}{h}\bigg(\beta\frac{(N\sigma_k)^2}{h}-\beta N\sigma_k+N(\sigma_k^{ij}\nabla_{ij}(N\sigma_k)
+\sigma_k^{ij}\delta_{ij}(N\sigma_k)-k\sigma_k)\bigg)\\
&\ \ -\frac{1}{h^2}N\sigma_k(N\sigma_k-h)
-N\sigma_k^{ij}\bigg(\frac{1}{h}\nabla_{ij}(N\sigma_k)
-\frac{1}{h^2}(N\sigma_k)\nabla_{ij}h\\
&\ \ -\frac{2}{h^2}\nabla_i(N\sigma_k)\nabla_jh
+\frac{2}{h^3}(N\sigma_k)\nabla_ih\nabla_jh\bigg)\\
&=(\beta+k-1)\bigg(\frac{N\sigma_k}{h}\bigg)^2
-(\beta+k-1)\frac{N\sigma_k}{h}
-\frac{2N}{h^3}\sigma_k^{ij}(N\sigma_k)\nabla_ih\nabla_jh.
\end{align*}

Recalling that $\rho^2=h^2+|\nabla h|^2$, we have
\begin{align*}
&\partial_t\bigg(\frac{\rho^2}{2}\bigg)-N\sigma_k^{ij}
\nabla_{ij}\bigg(\frac{\rho^2}{2}\bigg)\\
&=h\partial_th+\nabla_ih\nabla_i(\partial_th)
-N\sigma_k^{ij}(h\nabla_{ij}h+\nabla_ih\nabla_jh+
\nabla_mh\nabla_j\nabla_{mi}h+\nabla_{mi}h\nabla_{mj}h)\\
&=(k+1)hN\sigma_k-\rho^2+\sigma_k\nabla_ih\nabla_iN
-N\sigma_k^{ij}b_{mi}b_{mj}.
\end{align*}

Thus, by (\ref{3.6})
\begin{align*}
\partial_t\mathcal{P}
&\leq\frac{1}{1-\lambda\frac{\rho^2}{2}}\bigg[
\frac{\beta+k-1}{h^2}(N\sigma_k)^2
-\frac{(\beta+k-1)}{h}N\sigma_k
-\frac{2N}{h^3}\sigma_k^{ij}(N\sigma_k)\nabla_ih\nabla_jh
\bigg]\\
&\ \ +\frac{\lambda}{\bigg(1-\lambda\frac{\rho^2}{2}\bigg)^2}
\frac{N\sigma_k}{h}\bigg[(k+1)hN\sigma_k
-\rho^2+\sigma_k\nabla_ih\nabla_iN
-N\sigma_k^{ij}b_{mi}b_{mj}\bigg].
\end{align*}
Due to the inverse concavity of $(\sigma_k)^{\frac{1}{k}}$, we have the fact $((\sigma_k)^{\frac{1}{k}})^{ij}b_{im}b_{jm}
\geq(\sigma_k)^{\frac{2}{k}}$ (see \cite{AMZ}), which means $\sigma_k^{ij}b_{im}b_{jm}\geq k(\sigma_k)^{1+\frac{1}{k}}$. Then we have
\begin{align*}
\partial_t\mathcal{P}
&\leq\frac{1}{1-\lambda\frac{\rho^2}{2}}
\bigg(\frac{\beta+k-1}{h^2}Q^2-\frac{\beta+k-1}{h}Q\bigg)\\
&\ \ +\frac{\lambda Q}{(1-\lambda\frac{\rho^2}{2})^2}
\bigg((k+1)Q+\frac{Q}{hN}\nabla_ih\nabla_iN
-\frac{k}{h}(\frac{1}{N})^{\frac{1}{k}}Q^{1+\frac{1}{k}}\bigg).
\end{align*}
From the Lemma \ref{la3.1} and Lemma \ref{la3.2}, there exists some positive constants  $c_1$, $c_2$ and $c_3$ independent of $t$ such that
$$\partial_t\mathcal{P}\leq c_1\mathcal{P}+c_2\mathcal{P}^2
-c_3\mathcal{P}^{2+\frac{1}{k}}<0$$
provided $\mathcal{P}$ is sufficiently large. Thus $\mathcal{P}(x,t)$ is uniformly bounded from above. The upper bound of $\sigma_k$ follows from the uniformly bounds on $f, h$ and $\rho$.
\end{proof}

From  Lemma \ref{la3.1},
as discussed in Sect.\ref{S2} (or see \cite{CMY,U}), we know that the eigenvalues of $\{b_{ij}\}$ and $\{b^{ij}\}$ are respectively the principal radii and principal curvatures of $M_t$, where $\{b^{ij}\}$ is the inverse matrix of $\{b^{ij}\}$.
Therefore, to derive a positive upper bound of principal curvatures of $X(\cdot,t)$,
it is equivalent to estimate the upper bound of the eigenvalues of $\{b^{ij}\}$.

\begin{lemma}\label{la3.6}
For $1\leq k\leq n-1$, $k$ is an integer.
Under the assumptions of Lemma \ref{la3.1}, then the principal curvatures $\kappa_i$ satisfies
\begin{align*}
\kappa_i(\cdot,t)\leq C_5,\ \ i=1,\cdots,n-1,
\end{align*}
where $C_5$ is a positive constant independent of $t$.
\end{lemma}

\begin{proof}
For any fixed $t\in[0,T)$, we assume that the spatial maximum of maximum eigenvalue of matrix $\{\frac{b^{ij}}{h}\}$ attained at a point $x_t$ in the direction of the unit vector $e_1\in T_{x_t}\mathbb{S}^{n-1}$. By rotation, we also choose the orthonormal vector field such that $b^{ij}$ is diagonal
and the maximum eigenvalue of $\{\frac{b^{ij}}{h}\}$ is $\frac{b^{11}}{h}$.

Now, we first give the evolution equations of $b_{ij}$ and $b^{ij}$. For convenience, set $Q=\frac{1}{f}\frac{h^{2-p}}{\rho^{n-q}}$.
From the fact $b_{ij}=\nabla_{ij}h+h\delta_{ij}$, we have
\begin{align*}
\nabla_{ij}(\partial_th)=\sigma_k\nabla_{ij}Q
+\nabla_i\sigma_k\nabla_jQ
+\nabla_j\sigma_k\nabla_iQ+Q\nabla_{ij}\sigma_k-\nabla_{ij}h,
\end{align*}
where $\nabla_{ij}\sigma_k=\sigma_k^{ls,mn}\nabla_jb_{ls}\nabla_{i}b_{mn}+\sigma_k^{ls}\nabla_{ij}b_{ls}$. By the Gauss equation
$$\nabla_{ij}b_{ls}=\nabla_{ls}b_{ij}+\delta_{ij}\nabla_{ls}h-\delta_{ls}\nabla_{ij}h+\delta_{is}\nabla_{lj}h
-\delta_{lj}\nabla_{is}h,$$
then
\begin{align*}
\nabla_{ij}(\partial_th)&=Q\sigma_k^{ls,mn}
\nabla_jb_{ls}\nabla_{i}b_{mn}
+Q\sigma_k^{ls}\nabla_{ls}b_{ij}
+kQ\sigma_k\delta_{ij}\\
&\ \ -Q\sigma_k^{ls}\delta_{ls}b_{ij}+Q(\sigma_k^{il}
b_{jl}-\sigma_k^{js}b_{is})\\
&\ \ +\sigma_k\nabla_{ij}Q+\nabla_i\sigma_k\nabla_jQ
+\nabla_j\sigma_k\nabla_iQ-\nabla_{ij}h.
\end{align*}
Thus
\begin{align*}
\partial_tb_{ij}-Q\sigma_k^{ls}\nabla_{ls}b_{ij}
&=(k+1)Q\sigma_k\delta_{ij}-Q\sigma_k^{ls}\delta_{ls}b_{ij}
+Q(\sigma_k^{il}b_{jl}-\sigma_k^{jl}b_{il})\\
&\ \ \ \ +Q\sigma_k^{ls,mn}\nabla_jb_{ls}\nabla_ib_{mn}
+\sigma_k\nabla_{ij}Q+\nabla_j\sigma_k\nabla_{i}Q\\
&\ \ \ \ +\nabla_i\sigma_k\nabla_{j}Q-b_{ij}
\end{align*}
The evolution equation of $b^{ij}$ then follows from $\partial_tb^{ij}=-b^{im}b^{tj}\partial_tb_{mt}$, i.e.
\begin{align*}
\partial_tb^{ij}-Q\sigma_k^{ls}\nabla_{ls}b^{ij}
&=-(k+1)Q\sigma_kb^{il}b^{jl}
+Q\sigma_k^{ls}\delta_{ls}b^{ij}\\
&\ \ \ -Qb^{il}b^{js}(\sigma_k^{yl}b_{ys}-\sigma_k^{ys}b_{yl})\\
&\ \ \ -Qb^{ir}b^{ju}(\sigma_k^{ls,mn}+2\sigma_k^{lm}b^{ns})
\nabla_rb_{ls}\nabla_ub_{mn}\\
&\ \ \ -b^{il}b^{js}(\sigma_k\nabla_{ls}Q+\nabla_s
\sigma_k\nabla_lQ+\nabla_l\sigma_k\nabla_sQ)+b^{ij}.
\end{align*}

At $x_t$, we have
\begin{align}\label{3.10}
\nabla_i(\frac{b^{11}}{h})=0,\ \ i.e., \ \ b^{11}b_{11i}=-\frac{h_i}{h},
\end{align}
and
$$\nabla_{ij}(\frac{b^{11}}{h})\leq0.$$

Next, we compute the evolution equation of $\frac{b^{11}}{h}$ as
\begin{align*}
&\partial_t\frac{b^{11}}{h}-Q\sigma_k^{ij}
\nabla_{ij}\frac{b^{11}}{h}\\
&=\frac{2}{h}Q\sigma_k^{ij}
\nabla_i\frac{b^{11}}{h}\nabla_jh
+\frac{Q}{h^2}b^{11}\sigma_k^{ij}\nabla_{ij}h
-(k+1)\frac{Q}{h}\sigma_k(b^{11})^2
+\frac{Q}{h}\sigma_k^{ij}\delta_{ij}b^{11}\\
&\ \ \ \ -\frac{Q}{h}(b^{11})^2(\sigma_k^{ij,ml}
+2\sigma_k^{im}b^{lj})\nabla_1b_{ij}\nabla_1b_{ml}
-\frac{b^{11}}{h^2}Q\sigma_k+2\frac{b^{11}}{h}\\
&\ \ \ \ -\frac{1}{h}(b^{11})^2(\nabla_{11}Q\sigma_k
+2\nabla_1\sigma_k\nabla_1Q)\\
&=\frac{2}{h}Q\sigma_k^{ij}
\nabla_i\frac{b^{11}}{h}\nabla_jh
-(k+1)\frac{Q}{h}\sigma_k(b^{11})^2
-\frac{Q}{h}(b^{11})^2(\sigma_k^{ij,ml}
+2\sigma_k^{im}b^{lj})\nabla_1b_{ij}\nabla_1b_{ml}\\
&\ \ \ \ -\frac{1}{h}(b^{11})^2(\nabla_{11}Q\sigma_k
+2\nabla_1\sigma_k\nabla_1Q)+(k-1)\frac{b^{11}}{h^2}
Q\sigma_k+\frac{2b^{11}}{h}.
\end{align*}
According to inverse concavity of $(\sigma_k)^{\frac{1}{k}}$, we obtain, by results in \cite{AMZ}
\begin{align}\label{3.11}
(\sigma_k^{ij,ml}+2\sigma_k^{im}b^{lj})\nabla_1b_{ij}
\nabla_1b_{ml}\geq\frac{k+1}{k}
\frac{(\nabla_1\sigma_k)^2}{\sigma_k}.
\end{align}
On the other hand, by the Schwartz inequality, the following inequality holds
\begin{align}\label{3.12}
2|\nabla_1\sigma_k\nabla_1Q|\leq\frac{k+1}{k}
\frac{Q(\nabla_1\sigma_k)^2}{\sigma_k}
+\frac{k}{k+1}\frac{\sigma_k(\nabla_1Q)^2}{Q}.
\end{align}
Using (\ref{3.11}) and (\ref{3.12}), at $x_t$, there is
\begin{align}\label{3.13}
\partial_t\frac{b^{11}}{h}\leq-\frac{(b^{11})^2}{h}
\sigma_k[\nabla_{11}Q-\frac{k}{k+1}\frac{(\nabla_1Q)^2}{Q}
+(k+1)Q+(1-k)\frac{Qb_{11}}{h}]+\frac{2b^{11}}{h}.
\end{align}

Let $\iota$ be the arc-length of the great circle passing through $x_t$ with the unit tangent vector $e_1$, then
\begin{align}\label{3.14}
\nonumber\nabla_{11}Q-\frac{k}{k+1}\frac{(\nabla_1Q)^2}{Q}
+(k+1)Q&=(k+1)Q^{\frac{k}{k+1}}
(Q^{\frac{1}{k+1}}+(Q^{\frac{k}{k+1}})_{\iota\iota})\\
&=(k+1)Q(1+Q^{-\frac{1}{k+1}}
(Q^{\frac{k}{k+1}})_{\iota\iota}).
\end{align}
Notice that
$$\nabla_\iota Q=(f^{-1})_\iota h^{2-p}\rho^{q-n}
+(h^{2-p})_\iota\rho^{q-n}f^{-1}
+(\rho^{q-n})_\iota h^{2-p}f^{-1}$$
and
\begin{align*}
\nabla_{\iota\iota}Q&
=(f^{-1})_{\iota\iota}h^{2-p}\rho^{q-n}
+2(f^{-1})_{\iota}(h^{2-p})_\iota\rho^{q-n}
+(h^{2-p})_{\iota\iota}\rho^{q-n}f^{-1}\\
&\ \ +2(h^{2-p})_\iota(\rho^{q-n})_\iota f^{-1}
+(\rho^{q-n})_{\iota\iota}h^{2-p}f^{-1}
+2(\rho^{q-n})_\iota(f^{-1})_\iota h^{2-p}.
\end{align*}
We have by direct computations
\begin{align*}
1+Q^{-\frac{1}{k+1}}(Q^{\frac{k}{k+1}})_{\iota\iota}
&=1+\frac{Q^{-1}}{k+1}Q_{\iota\iota}-
\frac{kQ^{-2}}{(k+1)^2}Q^2_{\iota}\\
&=1+\frac{h^{p-2}}{k+1}(h^{2-p})_{\iota\iota}
+\frac{2f(f^{-1})_\iota}{k+1}[(h^{2-p})_\iota h^{p-2}
+(\rho^{q-n})_\iota \rho^{n-q}]\\
&\ \ \ \ +\frac{f}{k+1}(f^{-1})_{\iota\iota}
+\frac{2h^{p-2}\rho^{n-q}}{k+1}(h^{2-p})_\iota
(\rho^{q-n})_\iota+\frac{\rho^{n-q}}{k+1}
(\rho^{q-n})_{\iota\iota}\\
&\ \ \ \ -\frac{2kf^2}{(k+1)^2}(f^{-1})^2_{\iota}
-\frac{kfh^{p-2}\rho^{n-q}}{(k+1)^2}
[h^{p-2}(h^{2-p})_\iota^2+\rho^{n-q}(\rho^{q-n})_\iota^2]\\
&\geq\frac{h^{p-2}}{k+1}(h^{2-p})_{\iota\iota}
+\frac{2f(f^{-1})_\iota}{k+1}[(h^{2-p})_\iota h^{p-2}
+(\rho^{q-n})_\iota \rho^{n-q}]\\
&\ \ \ \ -\frac{kfh^{p-2}\rho^{n-q}}{(k+1)^2}
[h^{p-2}(h^{2-p})_\iota^2+\rho^{n-q}(\rho^{q-n})_\iota^2]\\
&\ \ \ \ +\frac{f}{k+1}(f^{-1})_{\iota\iota}
-\frac{2kf^2}{(k+1)^2}(f^{-1})^2_{\iota}\\
&\geq c_0,
\end{align*}
where $c_0$ is a positive constant depending on $f$, $h$,  $\rho$ and $k$.

Therefore, we can derive
$$\partial_t\frac{b^{11}}{h}\leq-
\bigg(\frac{b^{11}}{h}\bigg)^2Q
\sigma_k[(k+1)c_0+(1-k)\frac{b_{11}}{h}]
+\frac{2b^{11}}{h}.$$
By the uniform bounds on $f$, $h$ and $\sigma_k$, we conclude
$$\partial_t\frac{b^{11}}{h}
\leq-c_1\bigg(\frac{b^{11}}{h}\bigg)^2+\frac{2b^{11}}{h},$$
Here $c_1$ is a positive constant independent of $t$.
The maximum principle then gives the upper bound
of $b^{11}$, and the result follows.
\end{proof}

As a consequence of Lemmas \ref{la3.4}, \ref{la3.5} and \ref{la3.6}, we can obtain the following corollary.

\begin{corollary}\label{co3.7}
Under the assumptions of Lemma \ref{la3.1}, then
\begin{align*}
C_6\leq\kappa_i(\cdot,t)\leq C_5,\ \ i=1,\cdots,n-1,
\end{align*}
where $C_6$ is positive constant independent of $t$.
\end{corollary}

\section{ The long-time existence and convergence of flow (\ref{1.1})}\label{S4}
In this section we show the long-time existence and convergence of solutions to the flow (\ref{1.1}), that is, give the proof of Theorem \ref{th1.1}.
First, we prove that the functional $\mathcal{J}(M_t)$ defined below is monotone non-decreasing along the flow (\ref{1.1}).

\begin{lemma}\label{la4.1}
Define the functional
$$\mathcal{J}(M_t)=\frac{1}{q+k-n}\int_{\mathbb{S}^{n-1}}
\rho^{q+k-n}(u,t)du
-\frac{1}{p+1}\int_{\mathbb{S}^{n-1}}f(x)h^{p+1}(x,t)dx.$$
Let $p,q\in\mathbb{R}$ and $k=0,\cdots,n-1$.
Then $\mathcal{J}(M_t)$ is monotone non-decreasing
along the flow (\ref{1.1}).
\end{lemma}

\begin{proof}
From (\ref{2.3}) and (\ref{2.3.1}), by the fact that $\rho(u,t)du=h(x,t)dx$, we have
\begin{align*}
\partial_t\mathcal{J}&=\int_{\mathbb{S}^{n-1}}
\rho^{q+k-n-1}(u,t)\partial_t\rho(u,t)du -\int_{\mathbb{S}^{n-1}}f(x)h^{p}(x,t)\partial_th(x,t)dx\\
&=\int_{\mathbb{S}^{n-1}}
\rho^{q+k-n-1}\frac{\rho}{h}\partial_thdu
-\int_{\mathbb{S}^{n-1}}f(x)h^{p}\partial_thdx\\
&=\int_{\mathbb{S}^{n-1}}
\rho^{q-n}h\sigma_k\partial_thdx
-\int_{\mathbb{S}^{n-1}}f(x)\frac{h}{h^{1-p}}\partial_thdx\\
&=\int_{\mathbb{S}^{n-1}}
\bigg(\frac{f^{-1}\rho^{q-n}h^{2-p}
\sigma_k-h}{h^{1-p}}\bigg)\partial_thdx\\
&=\int_{\mathbb{S}^{n-1}}\frac{(f^{-1}\rho^{q-n}h^{2-p}
\sigma_k-h)^2}{h^{1-p}}dx\\
&\geq0.
\end{align*}
Clearly $\partial_t\mathcal{J}=0$ holds if and only if
$h^{1-p}\rho^{n-q}\sigma_k=f.$
\end{proof}

\subsection*{Proof of Theorem \ref{th1.1}}
From the Corollary \ref{co3.7}, we see that the equation
(\ref{2.3}) is uniformly parabolic and has the short time existence.
By $C^0, C^1$ and $C^2$-estimates, and the Krylov's theory \cite{K}, we get the H\"{o}lder continuity of $\nabla^2h$ and $\partial_th$.
Then we get the higher order derivation estimates by the regularity theory of the uniformly parabolic equations. Therefore, we obtain the long-time existence and regularity of the solution to the flow (\ref{2.3}).
Moreover, we have
$$\|h\|_{C_{\xi,t}^{i,j}(\mathbb{S}^{n-1}
\times[0,T))}\leq C$$
for some $C>0$, independent of $t$, and for each pairs of nonnegative integers $i$ and $j$.

With the aid of the Arzel\`{a}-Ascoli theorem and a diagonal argument, there exists a sequence of $t$, denoted by $\{t_k\}_{k\in\mathbb{N}}\subset(0,\infty)$, and a smooth function $h(x)$ such that
$$\|h(x,t_k)-h(x)\|_{C^i(\mathbb{S}^{n-1})}\rightarrow0$$
uniformly for any nonnegative integer $i$ as  $t_k\rightarrow\infty$.
This illustrates that $h(x)$ is a support function. Let $M$ be a convex body determined by $h(x)$, we conclude that $M$ is smooth and strictly convex with the origin in its interior.

In the following, we prove that Equation (\ref{1.0}) has a smooth solution.
From Lemma \ref{la4.1}, the functional
$\mathcal{J}(M_t)$
is non-decreasing along the flow. Then by Lemma \ref{la3.1}, we have
$$\mathcal{J}(M_t)\leq C,\ \  t\in[0,T).$$
The above facts show that
$$\int_0^t\mathcal{J}^\prime(M_t)dt=\mathcal{J}(M_t)
-\mathcal{J}(M_0)\leq \mathcal{J}(M_t)\leq C,$$
which leads to
$$\int_0^\infty \mathcal{J}^\prime(M_t)dt\leq C.$$
This means that there exists a subsequence of times $t_j\rightarrow\infty$ such that
$$\mathcal{J}^\prime(M_{t_j})\rightarrow0.$$

From the proof of Lemma \ref{la4.1}, by passing to the limit, we obtain
\begin{align*}
\int_{\mathbb{S}^{n-1}}\frac{(f^{-1}
\tilde{\rho}^{q-n}\tilde{h}^{2-p}
\tilde{\sigma}_k-\tilde{h})^2}{\tilde{h}^{1-p}}dx=0
\end{align*}
where $\tilde{\rho}$, $\tilde{h}$ and $\tilde{\sigma}_k$ are the support function, radial function and $k$-th elementary symmetric function for principal curvature radii of $M_\infty$. Therefore
$$\frac{\tilde{h}^{1-p}}{(\tilde{h}^2+|\nabla \tilde{h}|^2)^{\frac{n-q}{2}}}\tilde{\sigma}_k(x)=f(x).$$
The proof of Theorem \ref{th1.1} is completed.                                \hfill$\square$

\section{ The uniqueness of solution to the equation (\ref{1.2})}\label{S5}

In this section, the uniqueness of the solution to Equation (\ref{1.0}) can be obtained, namely Theorem \ref{th1.4}.

\subsection*{Proof of Theorem \ref{th1.4}}
Let $h_1$ and $h_2$ be two solutions of
\begin{align}\label{5.1}
\frac{h^{1-p}}{(h^2+|\nabla h|^2)^{\frac{n-q}{2}}}\sigma_k(\nabla^2h+hI)=f(x).
\end{align}
We first need to prove
\begin{align}\label{5.2}
\max\frac{h_1}{h_2}\leq1
\end{align}
by contradiction.

Suppose (\ref{5.2}) is not true, namely $\max\frac{h_1}{h_2}>1$. Assume $\frac{h_1}{h_2}$ attains its maximum at $z_0\in\mathbb{S}^{n-1}$, then $h_1(z_0)>h_2(z_0)$. Let $\mathcal{L}=\log\frac{h_1}{h_2}$, then at $z_0$
$$0=\nabla \mathcal{L}=\frac{\nabla h_1}{h_1}-\frac{\nabla h_2}{h_2},$$
and
$$0\geq\nabla^2 \mathcal{L}=\frac{\nabla^2 h_1}{h_1}-\frac{\nabla^2 h_2}{h_2}.$$
By Equation (\ref{5.1}) and the $k$-degree homogeneity of $\sigma_k$, we have at $z_0$
\begin{align*}
1&=\frac{\frac{h_2^{1-p}}{(h_2^2+|\nabla h_2|^2)^{\frac{n-q}{2}}}\sigma_k(\nabla^2h_2+h_2I)}
{\frac{h_1^{1-p}}{(h_1^2+|\nabla h_1|^2)^{\frac{n-q}{2}}}\sigma_k(\nabla^2h_1+h_1I)}\\
&=\frac{h_2^{1-p-n+q+k}\sigma_k(\frac{\nabla^2h_2}{h_2}+I)}
{h_1^{1-p-n+q+k}\sigma_k(\frac{\nabla^2h_1}{h_1}+I)}\\
&\geq\frac{h_2^{1-p-n+q+k}\sigma_k(\frac{\nabla^2h_2}{h_2}+I)}
{h_1^{1-p-n+q+k}\sigma_k(\frac{\nabla^2h_2}{h_2}+I)}\\
&=\frac{h_2^{1-p-n+q+k}}{h_1^{1-p-n+q+k}}.
\end{align*}
Since $q<p$, it follows that $h_2(z_0)\geq h_1(z_0)$.
This is a contradiction. Thus (\ref{5.2}) holds.

Interchanging $h_1$ and $h_2$, (\ref{5.2}) implies
$$\max\frac{h_2}{h_1}\leq1.$$
Combining it with (\ref{5.2}), we have $h_1\equiv h_2$.
 \hfill$\square$

\subsection*{Acknowledgments}

The authors would like to express their heartfelt thanks to Professor H. Li, Professor X. Zhang, Q. Ding, Professor Y. Liu and Professor J. Lu for helpful comments and suggestions.

\vskip 1.0cm


\begin{thebibliography}{11}

{\small


\bibitem{AMZ} B. Andrews, J. McCoy and Y. Zheng, {\it Contracting convex hypersurfaces by curvature}, Adv. Math., {\bf 229} (2016), 174-201.

\bibitem{Bo} K. J. B\"or\"oczky, {\it The logarithmic Minkowski problem}, J. Amer. Math. Soc., {\bf 26} (2013), 831-852.

\bibitem{B1} P. Bryan, M. Ivaki and J. Scheuer, {\it A unified flow approach to smooth, even $L_p$-Minkowski problems}, Anal. PDE, {\bf 12} (2019), 259-280.

\bibitem{C0} L. Chen, {\it Uniqueness of solutions to $L_p$-Christoffel-Minkowski problem for $p<1$}, J. Funct. Anal., {\bf 279} (2020), 108692.

\bibitem{CTW} L. Chen, Q. Tu, D. Wu and N. Xiang, {\it $C^{1,1}$ regularity for solutions to the degenerate $L_p$ dual Minkowski problem}, Car. Var. Partial Differential Equations, {\bf 60} (2021), Papre No. 115.

\bibitem{CH} C. Chen, Y. Huang and Y. Zhao, {\it Smooth solutions to the $L_p$ dual Minkowski problem}, Math. Ann., {\bf 373} (2019), 953-976.

\bibitem{CL} H. Chen and Q.-R. Li, {\it The $L_p$ dual Minkowski problem and related parabolic flows}, J.
    Funct. Anal., {\bf 281} (2021) 109139.

\bibitem{Cw} W. Chen, {\it $L_p$ Minkowski problem with not necessarily positive data}, Adv. Math, {\bf 201} (2006) 77-89.

\bibitem{CMY} S.-Y. A. Chang, X. Ma and P. Yang, {\it Principal curvature estimates for the convex level sets of semilinear elliptic equations}, Discrete Contin. Dyn. Syst., {\bf 28} (2010), 1151-1164.

\bibitem{C} K.-B. Chow, {\it Deforming a hypersurfaces by its Gauss-Kronecker curvature}, Commun. Pure Appl. Math, {\bf 38} (1985), 867-882.

\bibitem{CW} K. Chou and X. Wang, {\it A logarithmic Gauss curvature flow and the Minkowski problem}, Ann. Inst. H. Poincar\'{e} Anal. Non Lin\'{e}aire, {\bf 17} (2000), 733-751.

\bibitem{FLX} Y. Feng, W. Liu and L. Xu, {\it Existence of non-symmetric solutions to the Gaussian Minkowski problem}, J. Geom. Anal., {\bf 33} (2023), No. 89.

\bibitem{F1} W. J. Firey, {\it Shapes of worn stones}, Mathematika, {\bf 21} (1974), 1-11.

\bibitem{G} R. J. Gardner, {\it Geometric Tomography}, Second ed., Gambridge Univ. Press, Cambridge, 2006.

\bibitem{G1} R. J. Gardner, D. Hug, W. Weil, S. Xing and D.Ye, {\it General volumes in the Orlicz-Brunn-Minkowski theory and a related Minkowski problem I}, Calc.Var. Partial Differential Equations, {\bf 58} (2019), Art, 12.

\bibitem{G2} R. J. Gardner, D. Hug, S. Xing and D.Ye, {\it General volumes in the Orlicz-Brunn-Minkowski theory and a related Minkowski problem II}, Calc.Var. Partial Differential Equations, {\bf 59} (2020), Art, 15.

\bibitem{GM} P. Guan and X. Ma, {\it Christoffel-Minkowski problem I: Convexity of solutions of a hessian equation}, Invent. Math., {\bf 151} (2003), 553-577.

\bibitem{GX} P. Guan and C. Xia, {\it $L^p$ Christoffel-Minkowski problem: The case $1<p<k+1$}, Cal. Var. Partial Differential Equations, {\bf 57} (2018), Art. 69, 23 pp.

\bibitem{H0} C. Haberl, E. Lutwak, D. Yang and G. Zhang, {\it The even Orlicz Minkowski problem}, Adv. Math., {\bf 224} (2010), 2485-2510.

\bibitem{HMS} C. Hu, X. Ma and C. Shen, {\it On the Christoffel-Minkowski problem of Firey's $p$-sum}, Cal. Var. Partial Differential Equations, {\bf 21} (2004), 137-155.

\bibitem{HLY} Y. Huang, E. Lutwak, D. Yang and G. Zhang, {\it Geometric measures in the dual Brunn-Minkowski theory and their associated Minkowski problems}, Acta
    Math., {\bf 216} (2016), 325-388.

\bibitem{HL0} Y. Huang and Q. Lu, {\it On the regularity of the $L_p$ Minkowski problem}, Adv. in Appl. Math., {\bf 50} (2013), 268-280.

\bibitem{HZ} Y. Huang and Y. Zhao, {\it On the $L_p$ dual Minkowski problem}, Adv. Math., {\bf 332} (2018), 57-84.

\bibitem{Hui}G. Huisken, {\it Flow by mean curvature of convex surfaces into spheres}, J. Differes. Geom., {\bf 20} (1984), 237-266.

\bibitem{I} M. N. Ivaki, {\it Deforming a hypersurface by Gauss curvature and support function}, J. Funct. Anal., {\bf 271} (2016), 2133-2165.

\bibitem{J} H. Jian, J. Lu and X.-J. Wang, {\it Nonuniqueness of solutions to the $L_p$-Minkowski problem}, Adv. Math., {\bf 218} (2015), 845-856.

\bibitem{J1} H. Jian and J. Lu, {\it Existence of solutions to the Orlicz-Minkowski problem}, Adv. Math., {\bf 344} (2019), 262-288.

\bibitem{JL} H. Ju, B. Li and Y. Liu, {\it Deforming a convex hypersurface by anisotropic curvature flows}, Adv. Nonlinear Stud., {\bf 21} (2021), 155-166.

\bibitem{K} N. Krylov and M. Safonov, {\it A property of the solutions of parabolic equations with measurable coefficients}, Izv. Akad. Nauk SSSR Ser. Mat., {\bf 44} (1980), 161-175.

\bibitem{KS} H. Kr\"{o}ner and J. Scheuer, {\it Expansion of pinched hypersurfaces of the Euclidean and hyperbolic space by high powers of curvature},  arXiv preprint arXiv: 1703.07087 (2017).

\bibitem{L} E. Lutwak, {\it The Brunn-Minkowski-Firey theory I: Mixed volumes and the Minkowski problem}, J. Differential Geom., {\bf 38} (1993), 131-150.

\bibitem{LYZ0} E. Lutwak, D. Yang and G. Zhang, {\it On the $L_p$-Minkowski problem}, Trans. Amer. Math. Soc., {\bf 356} (2003), 4359-4370.

\bibitem{L0} E. Lutwak, D. Yang and G. Zhang, {\it $L_p$ dual curvature measures}, Adv. Math., {\bf 329} (2018), 85-132.

\bibitem{LL} Y. Liu and J. Lu, {\it A flow method for the dual Orlicz-Minkowski problem}, Trans. Amer. Math. Soc., {\bf 373} (2020),  5833-5853.

\bibitem{LS} Q.-R. Li, W. Sheng and X.-J. Wang, {\it Flow by Gauss curvature to the Aleksandrov and dual Minkowski problems}, J. Eur. Math. Soc., {\bf 22} (2020), 893-923.

\bibitem{LSW} Q.-R. Li, W. Sheng and X.-J. Wang, {\it Asymptotic convergence for a class of fully nonlinear curvature flows}, J. Geom. Anal., {\bf 30} (2020),  834-860.

\bibitem{M} H. Minkowski, {\it Volumen und Oberfl\"ache}, Math. Ann., {\bf 57} (1903),  447-495.

\bibitem{SY} W. Sheng and C. Yi, {\it A class of anisotropic expanding curvature flows}, J. Discrete Contin. Dyn. Syst., {\bf 40} (2020), 2017-2035.

\bibitem{S} R. Schneider, {\it Convex Bodies: The Brunn-Minkowski theory}, vol. 151, 2nd edn, Cambridge University Press, Cambridge, (2013).

\bibitem{U} J. Urbas, {\it An expansion of convex hypersurfaces},  J. Differential Geom., {\bf 33} (1991), 91-125.

\bibitem{Z} Y. Zhao, {\it The dual Minkowski problem for negative indices},  Calc.Var. Partial Differential Equations, {\bf 56} (2017), Art, 18.

\bibitem{Z1} Y. Zhao, {\it Existence of solutions to the even dual Minkowski problem}, J. Differential Geom., {\bf 110} (2018), 543-572.

\bibitem{Z2} G. Zhu, {\it The centro-affine Minkowski problem for polytopes}, J. Differential Geom., {\bf 101} (2015), 159-174.

\bibitem{Z3} G. Zhu, {\it The $L_p$ Minkowski problem for polytopes for $0<p<1$}, J. Funct. Anal., {\bf 269} (2015), 1070-1094.

\bibitem{Z4} G. Zhu, {\it Continuity of the solution to the $L_p$ Minkowski problem}, Proc. Amer. Math. Soc., {\bf 145} (2017), 379-386.





}
\end{thebibliography}
\end{document}